\begin{document}

 \newtheorem{thm}{Theorem}[section]
 \newtheorem{cor}[thm]{Corollary}
 \newtheorem{lem}[thm]{Lemma}{\rm}
 \newtheorem{prop}[thm]{Proposition}

 \newtheorem{defn}[thm]{Definition}{\rm}
 \newtheorem{assumption}[thm]{Assumption}
 \newtheorem{rem}[thm]{Remark}
 \newtheorem{ex}{Example}
\numberwithin{equation}{section}

\def\x{\mathbf{x}}
\def\P{\mathbb{P}}
\def\h{\mathbf{h}}
\def\p{\mathbf{p}}
\def\by{\mathbf{y}}
\def\bz{\mathbf{z}}
\def\F{\mathcal{F}}
\def\R{\mathbb{R}}
\def\T{\mathbf{T}}
\def\N{\mathbb{N}}
\def\D{\mathbf{D}}
\def\V{\mathbf{V}}
\def\U{\mathbf{U}}
\def\K{\mathbf{K}}
\def\Q{\mathbf{Q}}
\def\M{\mathbf{M}}
\def\oM{\overline{\mathbf{M}}}
\def\O{\mathbf{O}}
\def\C{\mathbb{C}}
\def\P{\mathbf{P}}
\def\Z{\mathbb{Z}}
\def\H{\mathcal{H}}
\def\A{\mathbf{A}}
\def\V{\mathbf{V}}
\def\AA{\overline{\mathbf{A}}}
\def\B{\mathbf{B}}
\def\c{\mathbf{C}}
\def\p{\mathbf{p}}
\def\L{\mathcal{L}}
\def\bS{\mathbf{S}}
\def\H{\mathcal{H}}
\def\I{\mathbf{I}}
\def\Y{\mathbf{Y}}
\def\X{\mathbf{X}}
\def\G{\mathbf{G}}
\def\f{\mathbf{f}}
\def\z{\mathbf{z}}
\def\v{\mathbf{v}}
\def\y{\mathbf{y}}
\def\d{\hat{d}}
\def\bx{\mathbf{x}}
\def\bI{\mathbf{I}}
\def\y{\mathbf{y}}
\def\g{\mathbf{g}}
\def\w{\mathbf{w}}
\def\b{\mathbf{b}}
\def\a{\mathbf{a}}
\def\u{\mathbf{u}}
\def\q{\mathbf{q}}
\def\e{\mathbf{e}}
\def\s{\mathcal{S}}
\def\cc{\mathcal{C}}
\def\co{{\rm co}\,}
\def\tg{\tilde{g}}
\def\tx{\tilde{\x}}
\def\tg{\tilde{g}}
\def\tA{\tilde{\A}}
\def\tf{\tilde{f}}
\def\bS{\mathbb{S}}
\def\sonc{{\bf SONC}}
\def\sosc{{\bf SOSC}}
\def\fonc{{\bf FONC}}

\title[Forms and spurious local minima]{
Homogeneous polynomials and spurious local minima on the unit sphere}
\author{Jean B. Lasserre}
\thanks{Work partly funded by the AI Interdisciplinary Institute ANITI through the French ``Investing for the Future PI3A" program under the Grant agreement ANR-19-PI3A-0004}
\address{LAAS-CNRS and Institute of Mathematics\\
University of Toulouse\\
LAAS, 7 avenue du Colonel Roche\\
31077 Toulouse C\'edex 4, France\\
email: lasserre@laas.fr}

\date{}

\begin{abstract}
We consider  forms
on the Euclidean unit sphere.  
We obtain obtain a simple and complete characterization of all 
points that satisfies the standard second-order necessary condition of optimality. It is stated
solely in terms of the value of (i) $f$, (ii) the norm of its gradient, and (iii) the first two smallest eigenvalues of its Hessian, all evaluated at the point. In fact this property also holds for twice continuous differentiable functions that are positively homogeneous.
We also characterize a class of degree-$d$ forms with no spurious local minima
on $\bS^{n-1}$ by using  a property of gradient ideals in algebraic geometry.
\end{abstract}

\maketitle

\section{Introduction}
Let $\bS^{n-1}$ (resp. $\mathcal{E}_n$) denotes the unit sphere (resp. Euclidean unit ball) in $\R^n$,
and  consider the optimization problem
\begin{equation}
\label{def-pb}
f^*\,=\,\min_\x\,\{\,f(\x):\:\x\in\bS^{n-1}\,\}\,,
\end{equation}
where $f$ is a degree-$d$ form and $f^*$ is understood as the global  minimum. (For linear $f$ or
degree-$2$ forms, \eqref{def-pb} can be solved efficiently.)

\subsection*{Background} 
In large-scale optimization problems (as is typical in machine learning applications), so far only first-order methods (e.g. stochastic gradient and its variants)  can be implemented. Therefore in the quest of the global minimum
it is important to be able to escape spurious local minima (see e.g. works by Jin et al. \cite{ge1}) or identify and characterize cases where no spurious local minima exist (as e.g. in Ge et al. \cite{ge2}). See also the discussions in
\cite{ge1,ge2} and references therein. \\

Even though minimizing forms on the unit sphere is a quite specific problem, it has important applications
For instance:

- Finding the maximal cardinality of $\alpha(G)$ of a stable set in a graph  $G$ 
reduces to minimizing a cubic form on the unit sphere. 

- Deciding convexity of an $n$-variate form reduces
to minimizing a  form on $\bS^{2n-1}$.

- Deciding nonnegativity of an even degree form
reduces to minimizing this form on $\bS^{n-1}$.

- Deciding copositivity of a symmetric matrix reduces to check whether some associated quartic form is is nonnegative on $\R^n$ (equivalently on $\bS^{n-1}$).

- In quantum information, the \emph{Best Separable State problem} 
 also relates to homogeneous polynomial optimization; see e.g. \cite{fawzi}.\\
 
 Crucial in the above problems is the search for the \emph{global} optimum
 and if possible rates of convergence of specialized algorithms like e.g.,
  the Moment-SOS-hierarchy  \cite{doherty,fawzi}  for converging sequences
  of \emph{lower bounds}  and  another (different)  Moment-SOS-hierarchy for
  converging sequences of \emph{upper bounds} 
 described in Lasserre \cite{lass-siopt} with rates provided in de Klerk and Laurent \cite{laurent1}. For more details on applications of
 homogeneous optimization on the sphere, the interested reader is referred to the discussion in 
 Fang and Fawzi \cite{fawzi}, de Klerk and Laurent \cite{laurent1} and the references therein.\\
 
In this  paper, by restricting to optimization of \emph{forms} on the unit sphere, we  provide a complete and rather simple characterization of all points which satisfy first- and second-order optimality conditions, solely in terms of the norm of the gradient of $f$ and the first two smallest eigenvalues of its Hessian, which to the best of our knowledge seems to be new. Therefore all such points (and local minimizers
in particular) are characterized by some property of the spectrum off the Hessian; namely how its first two smallest eigenvalues relate to  the value of $f$, an algebraic property of the form. Indeed in the context \eqref{def-pb}, 
 convexity plays little if no role for the absence of spurious local minima. For instance,
an arbitrary \emph{quadratic} form $\x\mapsto f(\x):=\x^T\Q\x$ has always a unique local (hence global) minimum 
(the smallest eigenvalue of $\Q$) no matter if $f$ is convex or not.  

This simple characterization could help to understand  the ``no spurious local minima" situation. 
Then combining this characterization with a decomposition property of gradient ideals, 
one also obtains a sufficient condition that identifies a class of forms with no spurious local minima.

Moreover this characterization is also particularly useful for algorithmic purposes. 
Indeed it provides an easy practical test in  first- and/or second-order minimization algorithms,
to check whether  a current iterate can be a candidate local minimum. 
 
\subsection*{Contribution} We restrict \eqref{def-pb} to degree-$d$ forms, with $d>2$
since for $d\leq2$ the problem has an easy solution in closed form. Our contribution is two-fold:\\

$\bullet$ We first provide the following simple and complete characterization of standard first-order and second-order necessary
optimality conditions (respectively denoted by {\bf (FONC)} and {\bf (SONC)}).

If $\x^*\in\bS^{n-1}$ is a local minimizer then {\bf (FONC)}-{\bf (SONC)}) reads:
\begin{equation}
\label{a}
\Vert \nabla f(\x^*)\Vert\,=\,d\,\vert f(\x^*)\vert\quad\mbox{and}\: 
\left\{\begin{array}{rl}
\lambda_{1}(\nabla^2f(\x^*))&\geq\,\mbox{$d\,f(\x^*)$, if $f(\x^*)\geq0$,}\\
\lambda_{2}(\nabla^2f(\x^*))&\geq\,\mbox{$d\,f(\x^*)$, if $f(\x^*)<0$,}\end{array}\right.
\end{equation}
where $\nabla^2f(\x^*)$ is the Hessian of $f$ at $\x^*$ and $\lambda_1(\nabla^2f(\x^*))$
(resp. $\lambda_2(\nabla^2f(\x^*))$) denotes the smallest (resp. second smallest) eigenvalue of
$\nabla^2f(\x^*)$. 

Moreover, if $f(\x^*)<0$ then $f(\x^*)=\lambda_1(\nabla^2f(\x^*))/d(d-1)$,
and the second condition which also reads $\lambda_2(\nabla^2f(\x^*))\geq \lambda_1(\nabla^2f(\x^*))/(d-1)$, states that the second smallest eigenvalue should be sufficiently separated from the smallest one.

Notice that \eqref{a} is stated solely in terms of (i) the value of $f$, (ii) the norm of its gradient, and (iii) the first two smallest eigenvalues of its Hesssian, evaluated at the point $\x^*$. 
To the best of our knowledge this characterization appears to be new. It is also worth noticing that this characterization remains valid for functions that are
positively homogeneous (of degree $d$) and twice continuously differentiable, i.e., such that $f(\lambda\x)=\lambda^df(\x)$ for all
$\lambda>0$ and all $\x$.

Then checking whether a point $\x$ satisfies \sonc~ is 
remarkably simple. It reduces
to check \eqref{a}, i.e., check whether $\Vert\nabla f(\x)\Vert=d\,\vert f(\x)\vert$ and then compare the value
$f(\x)$ with the two smallest eigenvalues of the Hesssian. This is very useful for any local optimization algorithm since one can easily check whether a curent iterate satisfies \eqref{a}.

$\bullet$ Finally, with any degree-$d$ form $f$ we associate  a polynomial $g$ of degree $d$
such that (i) $g$ coincide with $f$ on $\bS^{n-1}$, and (ii) all points $\x\in\bS^{n-1}$ that satisfy {\bf (FONC)} are critical points of $g$ (i.e. $\nabla g(\x)=0$) and the converse is also true. Then by using the characterization
\eqref{a} and invoking a certain decomposition of gradient ideals already nicely exploited by Nie et al. \cite{nie2} for unconstrained optimization, we provide a characterization of a class of degree-$d$ forms with no spurious local minima on $\bS^{n-1}$. 

At last but not least, we also remark that if a form $f$ can take negative values then minimizing
$f$ on the (convex) Euclidean unit ball $\mathcal{E}_n$ is easier than on $\bS^{n-1}$ 
and yields same (negative) minima and minimizers. In this case one may 
adapt the previous result and characterize a larger class of degree-$d$ forms  
with no spurious \emph{negative} local minima on $\bS^{n-1}$.

\section{Homogeneous optimization on the sphere}

\subsection{Notation and preliminary results}
Let $\R[\x]$ denote the ring of polynomials in the variables
$\x=(x_1,\ldots,x_n)$ and let $\Sigma[\x]\subset\R[\x]$ be there space of
sums-of-squares polynomials (SOS). Denote by $\R[\x]_d\subset\R[\x]$ the space of polynomials of degree at most $d$. 
Let $\nabla f(\x)$ (resp. $\nabla^2f(\x)$) denote the gradient 
(resp. Hessian) of $f$ at $\x$.
Recall that given polynomials $g_1,\ldots,g_s\in\R[\x]$, 
the notation $I=\langle g_1,g_2,\ldots,g_s\rangle$
stands for the ideal
\[\{\:\sum_{j=1}^sh_j\,g_j\::\:h_j\in\kappa[\x]\:\}\,,\quad(\mbox{$\kappa=\R$ or $\C$)},\] 
of $\kappa[\x]$ generated by the polynomials $g_1,\ldots,g_m$.

A polynomial $f\in\R[\x]$ is homogeneous of degree $d$ (and called a form) if
$f(\lambda\,\x)\,=\,\lambda^d\,f(\x)$ for all $\x\in\R^n$ and all $\lambda\in\R$. Then the important Euler's identity states that $\langle \nabla f(\x),\x\rangle=d\,f(\x)$ for all $\x\in\R^n$.
Similarly, $\x\mapsto \nabla f(\x)$ is homogeneous of degree $d-1$ and so $\nabla^2f(\x^*)\x=(d-1)\,\nabla f(\x)$.

Given a polynomial $p\in\R[\x]_d$, its homogenization $\tilde{p}\in\R[x_0,\x]_d$ is defined by
\[(x_0,\x)\mapsto \tilde{p}(x_0,\x)\,:=\,x_0^d\,p(\x/x_0),\quad(x_0,\x)\in\R^{n+1}.\]

Given $n$ forms $f_1,\ldots,f_n\in\R[\x]$ with respective coefficient vectors
$\f_1,\ldots,\f_n$, and given the system of polynomial equations
\[f_1(\x)\,=\,\cdots\,=\,f_n(\x)\,=\,0,\]
the \emph{resultant} ${\rm Res}(f_1,f_2,\ldots,f_{n})\in\R[\f_1,\ldots,\f_n]$ is a homogeneous polynomial in
$(\f_1,\ldots,\f_n)$ with the property:
\begin{eqnarray}
\label{prop-res}
{\rm Res}(f_1,f_2,\ldots,f_{n})&=&0\quad\Leftrightarrow\\
 \exists\u\,(\neq0)\,\in\C^{n}:&&f_1(\u)=\cdots=f_{n}(\u)\,=\,0\,.\end{eqnarray}
See e.g. \cite{cox,sturmfels,nie2}.

For a real symmetric matrix $\A\in\R^{n\times n}$,
denote by $\lambda_1(\A)\leq\lambda_2(\A),\ldots\,\leq\lambda_n(\A)$, its eigenvalues arranged in increasing order.

\subsection*{Optimization on the Euclidean sphere}
A point $\x^*\in\bS^{n-1}$ is said to be a local minimizer (and $f(\x^*)$ a local minimum) if there exists $\varepsilon>0$ and
a ball $\B(\x^*,\varepsilon)=\{\x:\Vert\x-\x^*\Vert<\varepsilon\}$ such that
$f(\x^*)\leq f(\x)$ for all $\x\in\bS^{n-1}\cap\B(\x^*,\varepsilon)$.

Below we recall some standard results in optimization, concerned with necessary and/or sufficient 
for optimality, in the context of the optimization problem \eqref{def-pb};
for a detailed account see e.g. Bertsekas \cite{bertsekas}.

\begin{prop}
\label{prop-fonc-sonc}
Let $f\in\R[\x]$ and for every $\x\in\bS^{n-1}$,
let $\x^\perp:=\{\u\,\in\bS^{n-1}: \u^T\x=0\}$. If $\x^*\in\bS^{n-1}$ is local minimizer of \eqref{def-pb} then there exists $\lambda^*\in\R$ such that:

(i) The {\bf First-Order Necessary Optimality-Condition (FONC)} holds: 
\begin{equation}
\label{FONC}
\nabla f(\x^*)+2\lambda^*\x^*\,=\,0\,.
\end{equation}
(ii) The {\bf Second-Order Necessary Optimality-Condition (SONC)} holds: 
\begin{equation}
\label{SONC}
\u^T\nabla^2 f(\x^*)\u+2\lambda^*\,\geq\,0\,,\quad\forall \u\in(\x^*)^\perp.
\end{equation}
(iii) Conversely, if $\x^*\in\bS^{n-1}$ satisfies \eqref{FONC} and the
{\bf Second-Order Sufficiency Optimality-Condition (SOSC)}
\begin{equation}
\label{SOSC}
\u^T\nabla^2 f(\x^*)\u+2\lambda^*\,>\,0\,,\quad\forall \u\in(\x^*)^\perp,
\end{equation}
then $\x^*$ is a local minimizer of \eqref{def-pb}.
\end{prop}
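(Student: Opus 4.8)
The plan is to treat \eqref{def-pb} as a standard smooth equality-constrained nonlinear program with the single constraint $h(\x):=\|\x\|^2-1=0$, whose gradient and Hessian are $\nabla h(\x)=2\x$ and $\nabla^2 h(\x)=2\I$. Because $\x^*\in\bS^{n-1}$ we have $\nabla h(\x^*)=2\x^*\neq0$, so the single active constraint gradient is nonzero and the Linear Independence Constraint Qualification holds automatically at every feasible point. This is precisely the hypothesis needed to invoke the Lagrange multiplier theorem and its associated second-order conditions (see e.g. Bertsekas \cite{bertsekas}); the tangent space to $\bS^{n-1}$ at $\x^*$ is $\{\u:\langle\nabla h(\x^*),\u\rangle=0\}=\{\u:\u^T\x^*=0\}$, which is exactly the set appearing in \eqref{SONC} and \eqref{SOSC} (restricted to unit vectors).

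For (i) and (ii) I would give a self-contained derivation via smooth feasible curves, which has the merit of explaining exactly where the terms $2\lambda^*$ come from. Fix any $\u$ with $\u^T\x^*=0$ and $\|\u\|=1$, and pick a curve $\gamma:(-\varepsilon,\varepsilon)\to\bS^{n-1}$ with $\gamma(0)=\x^*$ and $\gamma'(0)=\u$. Since $t\mapsto f(\gamma(t))$ has a local minimum at $t=0$, its first derivative $\langle\nabla f(\x^*),\u\rangle$ vanishes; as $\u$ ranges over $(\x^*)^\perp$ this forces $\nabla f(\x^*)$ to be collinear with $\x^*$, giving \eqref{FONC} with an appropriate scalar $\lambda^*$, namely $\nabla f(\x^*)=-2\lambda^*\x^*$. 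For the second derivative, differentiating $f\circ\gamma$ twice gives $\u^T\nabla^2 f(\x^*)\u+\langle\nabla f(\x^*),\gamma''(0)\rangle\geq0$; differentiating the constraint $\|\gamma(t)\|^2\equiv1$ twice yields $\langle\x^*,\gamma''(0)\rangle=-\|\u\|^2=-1$, and substituting \eqref{FONC} turns the curvature term into exactly $2\lambda^*$, which is \eqref{SONC}.

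For the converse (iii) I would invoke the standard second-order sufficiency theorem for equality constraints: under \eqref{FONC} and strict positivity of the Lagrangian Hessian $\nabla^2 f(\x^*)+2\lambda^*\I$ on the tangent space, $\x^*$ is a strict local minimizer. The main obstacle lies here: unlike the necessary conditions, sufficiency cannot be read off from a single curve, since one must control $f$ simultaneously along \emph{all} feasible points near $\x^*$. The cleanest route is a uniform second-order Taylor expansion of $f$ restricted to $\bS^{n-1}$ combined with compactness of the unit sphere of tangent directions, so that the strict inequality $\u^T\nabla^2 f(\x^*)\u+2\lambda^*>0$ on the compact set of unit tangent vectors provides a uniform positive lower bound dominating the higher-order remainder; the curvature correction $\langle\x^*,\gamma''(0)\rangle=-1$ computed above again accounts for the constant $2\lambda^*$ appearing (rather than $2\lambda^*\|\u\|^2$).
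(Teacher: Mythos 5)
Your proposal is correct, and it differs from the paper's proof mainly in how much it makes explicit. The paper's entire proof is one observation plus a citation: the gradient of the constraint $\Vert\x\Vert^2-1$ at $\x^*\in\bS^{n-1}$ is $2\x^*\neq0$, so the linear-independence constraint qualification holds, and all three assertions are then quoted from standard nonlinear-programming theory (Bertsekas). You follow that same route for part (iii) --- verify the constraint qualification, identify the tangent space, invoke the second-order sufficiency theorem --- but for (i) and (ii) you replace the citation by a self-contained derivation along feasible curves: the first derivative of $t\mapsto f(\gamma(t))$ forces $\nabla f(\x^*)$ to be collinear with $\x^*$, and the second derivative, combined with the constraint identity $\langle\x^*,\gamma''(0)\rangle=-\Vert\u\Vert^2$, produces exactly the additive constant $2\lambda^*$ in \eqref{SONC}. (Existence of the needed curve is immediate here: the great circle $t\mapsto\cos(t)\,\x^*+\sin(t)\,\u$ has $\gamma'(0)=\u$ and $\gamma''(0)=-\x^*$.) This unpacking is the argument hidden inside the textbook theorem, and it buys two things the paper's terse proof does not: it shows why the term is the constant $2\lambda^*$ rather than $2\lambda^*\Vert\u\Vert^2$ (an artifact of the paper's convention that $(\x^*)^\perp$ consists of \emph{unit} vectors), and it isolates the spherical curvature correction $-1$, which is exactly the quantity that later converts the Lagrangian-Hessian condition into the eigenvalue comparisons of Lemma \ref{lem-tau} and Corollary \ref{nc-hom}. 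Your remark that sufficiency in (iii) cannot be extracted from curves one at a time, and must instead come from a uniform Taylor bound over the compact set of unit tangent directions (or equivalently from the standard sufficiency theorem), is also accurate; the paper sidesteps this point entirely by citation, which is the only substantive difference between the two proofs.
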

\begin{proof}
At $\x^*\in\bS^{n-1}$ the gradient  of the constraint $\Vert\x\Vert^2=1$ at $\x^*$ is simply $2\x^*$ ($\neq0$) and 
therefore is linearly independent, i.e., a basic constraint qualification holds true. Therefore \eqref{FONC}-\eqref{SONC} and (iii) follow from standard results in non-linear programming \cite{bertsekas}. 
\end{proof}
The following result is  an easy consequence of Proposition \ref{prop-fonc-sonc} but 
useful for our purpose.
\begin{cor}
\label{equiv}
Let $f$ be a degree-$d$ form and $\x^*\in\bS^{n-1}$ be a local minimizer.
Then
in \eqref{FONC}, $2\lambda^*=-d\,f(\x^*)$. In addition, \eqref{FONC} holds if and only if 
\begin{equation}
\label{new-font}
\Vert\nabla f(\x^*)\Vert^2\,=\,d^2f(\x^*)^2\,,
\end{equation}
and {\bf (SONC)} reads:
\begin{equation}
\label{SONC-hom}
\u^T\nabla^2 f(\x^*)\u\,\geq\, d\,f(\x^*)\,,\quad\forall \u\in(\x^*)^\perp.
\end{equation}
\end{cor}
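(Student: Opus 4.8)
The plan is to derive all three assertions from Euler's identity $\langle\nabla f(\x^*),\x^*\rangle=d\,f(\x^*)$ combined with the fact that $\x^*$ is a unit vector. First I would pin down the multiplier: taking the inner product of \eqref{FONC} with $\x^*$ and using $\Vert\x^*\Vert^2=1$ gives $\langle\nabla f(\x^*),\x^*\rangle+2\lambda^*=0$, and Euler's identity replaces the first term by $d\,f(\x^*)$, yielding $2\lambda^*=-d\,f(\x^*)$. This is the first assertion, and it is the fact that powers every subsequent substitution.

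Next I would prove the equivalence between \eqref{FONC} and \eqref{new-font}. The forward implication is immediate: from \eqref{FONC} we have $\nabla f(\x^*)=-2\lambda^*\x^*$, so taking norms and inserting the value of $\lambda^*$ gives $\Vert\nabla f(\x^*)\Vert^2=(2\lambda^*)^2=d^2f(\x^*)^2$. For the converse the key step is to decompose the gradient along $\x^*$ and its orthogonal complement: since $\Vert\x^*\Vert=1$, write $\nabla f(\x^*)=\langle\nabla f(\x^*),\x^*\rangle\,\x^*+\v$ with $\v\perp\x^*$. Euler's identity fixes the parallel part as $d\,f(\x^*)\,\x^*$, so that $\Vert\nabla f(\x^*)\Vert^2=d^2f(\x^*)^2+\Vert\v\Vert^2$. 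Hence \eqref{new-font} holds if and only if $\v=0$, i.e. $\nabla f(\x^*)=d\,f(\x^*)\,\x^*$, which is exactly \eqref{FONC} with $2\lambda^*=-d\,f(\x^*)$.

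Finally, \eqref{SONC-hom} follows by substitution: inserting $2\lambda^*=-d\,f(\x^*)$ into \eqref{SONC}, the condition $\u^T\nabla^2f(\x^*)\u+2\lambda^*\geq0$ for every unit vector $\u\in(\x^*)^\perp$ becomes $\u^T\nabla^2f(\x^*)\u\geq d\,f(\x^*)$, as claimed.

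As for the main obstacle, there is essentially none: all three statements reduce to algebraic manipulations once Euler's identity is invoked. The only point requiring a moment of care is the converse in the equivalence \eqref{FONC}$\Leftrightarrow$\eqref{new-font}, where one must verify that the scalar condition \eqref{new-font} alone forces the gradient to be collinear with $\x^*$. This is precisely where the orthogonal decomposition is needed, since Euler's identity guarantees that the component of $\nabla f(\x^*)$ along $\x^*$ always carries the prescribed magnitude $d\,\vert f(\x^*)\vert$; consequently \eqref{new-font} can hold only when the orthogonal component $\v$ vanishes.
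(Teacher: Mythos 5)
Your proof is correct and follows essentially the same route as the paper: the multiplier is pinned down by pairing \eqref{FONC} with $\x^*$ and invoking Euler's identity, the forward implication is taking norms, and \eqref{SONC-hom} is direct substitution. Your converse via the orthogonal decomposition $\nabla f(\x^*)=d\,f(\x^*)\,\x^*+\v$ and Pythagoras is the same computation as the paper's, which expands $\Vert\nabla f(\x^*)-d\,f(\x^*)\,\x^*\Vert^2$ (this quantity is exactly your $\Vert\v\Vert^2$) and shows it equals $\Vert\nabla f(\x^*)\Vert^2-d^2f(\x^*)^2=0$.
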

\begin{proof}
In \eqref{FONC} we obtain
\[d\,f(\x^*)\,=\,\langle\nabla f(\x^*),\x^*\rangle\,=\,-2\lambda^*\Vert\x^*\Vert^2\,=\,-2\lambda^*,\]
and therefore $\Vert\nabla f(\x^*)\Vert^2\,=\,(2\lambda^*)^2\,\Vert\x\Vert^2\,=\,d^2f(\x^*)^2$.
Then \eqref{SONC-hom} follows from \eqref{SONC}.
Conversely, assume that \eqref{new-font} holds at $\x^*\in\bS^{n-1}$. Then
\[\Vert \nabla f(\x^*)- d\,f(\x^*)\,\x^*\Vert^2\,=\,\Vert\nabla f(\x^*)\Vert^2
-\underbrace{2d\,f(\x^*) \langle \nabla f(\x^*),\x^*\rangle}_{=-2d^2f(\x^*)^2}
+d^2f(\x^*)^2\Vert\x^*\Vert^2,\]
that is,
\[\Vert \nabla f(\x^*)-d\,f(\x^*)\,\x^*\Vert^2\,=\,\Vert \nabla f(\x^*)\Vert^2-d^2f(\x^*)^2\,=\,0\,,\]
and so \eqref{FONC} holds with $\lambda^*=-d\,f(\x^*)/2$, and again \eqref{SONC-hom} follows from \eqref{SONC}.
\end{proof}
Note in passing that all  \fonc ~points are solutions of
\[\nabla f(\x)\,=\,d\,f(\x)\,\x\,,\]
a system of $n$ polynomial equations in $n$ variables (the dual variable $\lambda^*$ in \eqref{SONC}  has been identified, thanks to
Euler' identity). Then generically, by Bezout's theorem it has at most $(d+1)^n$ solutions. 

\subsection{A distinguished representation}

In this section we obtain a more specific characterization of points
that satisfies {\bf (FONC)}-{\bf (SONC)} solely in terms of 
$f(\x^*)$, $\lambda_1(\nabla^2f(\x^*))$ and $\lambda_2(\nabla^2f(\x^*))$.

When $d\leq2$, Problem \eqref{def-pb} is easy and completely solved
analytically so we only consider the case $d>2$.

\begin{lem}
\label{lem-tau}
Let $f\in\R[\x]$ be a form of degree $d>2$, and let
$\x^*\in\bS^{n-1}$ satisfy {\bf (FONC)}.
Define:
\begin{equation}
\label{tau}
\tau(\x^*)\,:=\,\min_{\u\in(\x^*)^\perp}\u^T\nabla^2f(\x^*)\u\,.
\end{equation}
Then
\begin{equation}
\label{carac-eig}
\lambda_1(\nabla^2f(\x^*))\,=\,\min\,[\,d\,(d-1)\,f(\x^*)\,,\,\tau(\x^*)\,]\,,
\end{equation}
and if $\lambda_1(\nabla^2f(\x^*))=d\,(d-1)\,f(\x^*)$ then $\tau(\x^*)=\lambda_2(\nabla^2f(\x^*))$. \\

\noindent
Hence if $\tau(\x^*)\neq\lambda_1(\nabla^2f(\x^*))$ then
$\lambda_1(\nabla^2f(\x^*))=d\,(d-1)\,f(\x^*)$ and $\tau(\x^*)=\lambda_2(\nabla^2f(\x^*))$.
\end{lem}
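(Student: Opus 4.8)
The plan is to exploit the homogeneity of $f$ in order to recognize $\x^*$ itself as an eigenvector of the Hessian; this immediately splits the spectrum into one distinguished eigenvalue and the eigenvalues of the Hessian restricted to the tangent directions $(\x^*)^\perp$. First I would combine Euler's identity with the fact that $\nabla f$ is homogeneous of degree $d-1$. Applying the identity $\nabla^2 f(\x^*)\,\x^*=(d-1)\,\nabla f(\x^*)$ stated in the preliminaries, together with \textbf{(FONC)} in the form $\nabla f(\x^*)=d\,f(\x^*)\,\x^*$ (Corollary \ref{equiv}, where $2\lambda^*=-d\,f(\x^*)$), yields
\[
\nabla^2 f(\x^*)\,\x^*\,=\,(d-1)\,d\,f(\x^*)\,\x^*\,.
\]
Hence $\x^*$ is an eigenvector of the symmetric matrix $\nabla^2 f(\x^*)$ associated with the eigenvalue $\mu:=d(d-1)f(\x^*)$, and by symmetry the orthogonal complement of $\x^*$ (an $(n-1)$-dimensional subspace) is invariant under $\nabla^2 f(\x^*)$.

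Next I would use this block structure. Decomposing $\R^n=\mathrm{span}(\x^*)\oplus(\x^*)^\perp$, the full spectrum of $\nabla^2 f(\x^*)$ is the union of $\{\mu\}$ with the spectrum of the restriction of $\nabla^2 f(\x^*)$ to $(\x^*)^\perp$. By the variational characterization of the smallest eigenvalue, the quantity $\tau(\x^*)$ defined in \eqref{tau} is exactly the smallest eigenvalue of that restriction. Since $\lambda_1(\nabla^2 f(\x^*))$ is the minimum over the whole spectrum, it equals the smaller of $\mu$ and $\tau(\x^*)$, which is precisely the asserted formula \eqref{carac-eig}.

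Finally, the two refinements follow by sorting the $n$ eigenvalues, namely $\mu$ together with the eigenvalues $\nu_1\le\cdots\le\nu_{n-1}$ of the restriction, where $\nu_1=\tau(\x^*)$. For the first refinement, if $\lambda_1(\nabla^2 f(\x^*))=\mu$ then $\mu\le\nu_1$, so the sorted list begins $\mu\le\nu_1\le\cdots$ and therefore $\lambda_2(\nabla^2 f(\x^*))=\nu_1=\tau(\x^*)$. For the converse refinement, assume $\tau(\x^*)=\lambda_2(\nabla^2 f(\x^*))>\lambda_1(\nabla^2 f(\x^*))$ and invoke \eqref{carac-eig}: if $\mu$ were not equal to $\lambda_1(\nabla^2 f(\x^*))$, then the minimum in \eqref{carac-eig} would have to be attained at $\tau(\x^*)$, forcing $\tau(\x^*)=\lambda_1(\nabla^2 f(\x^*))<\lambda_2(\nabla^2 f(\x^*))=\tau(\x^*)$, a contradiction; hence $\lambda_1(\nabla^2 f(\x^*))=\mu=d(d-1)f(\x^*)$.

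I expect the only delicate point to be the careful bookkeeping of eigenvalue multiplicities and ties, for instance the boundary case $\mu=\tau(\x^*)$, when identifying $\lambda_2(\nabla^2 f(\x^*))$ with $\tau(\x^*)$; the genuinely substantive step, namely recognizing $\x^*$ as a Hessian eigenvector via Euler's identity, is short, and everything that follows is purely linear-algebraic sorting.
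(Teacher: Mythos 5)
Your proof is correct and takes essentially the same route as the paper: the paper expands the Rayleigh quotient $\v^T\nabla^2 f(\x^*)\v$ along the decomposition $\v=\theta\,\x^*+\gamma\,\u$ and uses Euler's identity together with \textbf{(FONC)} to make the cross term vanish, which is exactly your observation that $\x^*$ is an eigenvector of $\nabla^2 f(\x^*)$ with eigenvalue $d(d-1)f(\x^*)$. Your sorting of the multiset $\{\mu\}\cup\{\nu_1,\ldots,\nu_{n-1}\}$ and the contradiction argument for the converse refinement match the paper's deflation step ($\lambda_2=\min_{\v\perp\x^*,\,\Vert\v\Vert=1}\v^T\nabla^2 f(\x^*)\v$) and its use of the identity $\lambda_1=\min\,[\,d(d-1)f(\x^*),\,\tau(\x^*)\,]$, respectively.
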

\begin{proof}
Observe that $\R^n=\theta\x^*\oplus \gamma\,(\x^*)^\perp$
where $\theta,\gamma$ runs over $\R$.
Then writing $\v\in\bS^{n-1}$ as $\theta\x^*+\gamma\u$ with $\u\in(\x^*)^\perp$, 
one obtains $\Vert\v\Vert^2=\theta^2+\gamma^2$. Next,
\[ \v^T\nabla^2f(\x^*)\v\,=\,\theta^2\langle \x^*,\nabla^2f(\x^*)\x^*\rangle
+2\gamma\theta\,\langle\u,\nabla^2f(\x^*)\x^*\rangle+\gamma^2\,\u^T\nabla^2f(\x^*)\u.\]
Using homogeneity of $f$ (hence of $\nabla f(\x)$ as well), yields
\[\langle \x^*,\nabla^2f(\x^*)\x^*\rangle\,=\,
(d-1)\langle\x^*,\nabla f(\x^*)\rangle
\,=\,d\,(d-1)\,f(\x^*),\]
and
\[\langle \u,\nabla^2f(\x^*)\x^*\rangle\,=\,(d-1)\langle\u,\nabla f(\x^*)\rangle\,=\,
d\,(d-1)\,f(\x^*)\,\u^T\x^*=0,\]
so that
\[\v^T\nabla^2f(\x^*)\v\,=\,\theta^2\,d(d-1)\,f(\x^*)
+\gamma^2\langle \u,\nabla^2f(\x^*)\u\rangle\,.\]
This yields
\[\lambda_1(\nabla^2f(\x^*))
\,=\,\min_{\Vert\v\Vert=1}\v^T\nabla^2f(\x^*)\v\,=\,\min\,[\,d\,(d-1)\,f(\x^*)\,,\tau(\x^*)\,],\]
which is the desired result \eqref{carac-eig}.
Next, if $\lambda_1(\nabla^2f(\x^*))\,=\,d\,(d-1)\,f(\x^*)$ 
(hence with associated eigenvector $\x^*$), then
\[\lambda_2(\nabla^2f(\x^*))\,=\,\min_{\v\perp\x^*\,;\Vert\v\Vert=1}\v^T\nabla^2f(\x^*)\v\,=\,\tau(\x^*).\]
Conversely, if $\tau(\x^*)=\lambda_2(\nabla^2f(\x^*))>\lambda_1(\nabla^2f(\x^*))$
then by \eqref{carac-eig}, $\lambda_1(\nabla^2f(\x^*))=d(d-1)\,f(\x^*)$.
\end{proof}
We are now in position to characterizes in a simple compact form,
all points of $\bS^{n-1}$ that satisfy {\bf (SONC)}
when $f$ is a degree-$d$ form.
\begin{cor}
\label{nc-hom}
Let $f$ be a degree-$d$ form with $d>2$, and let $\x^*\in\bS^{n-1}$ satisfy {\bf (FONC)}.
Then $\x^*$ satisfies {\bf (SONC)} if and only if:
\begin{eqnarray}
\label{lambdamin1}
\lambda_1(\nabla^2f(\x^*))&\geq&d\,f(\x^*)\,\quad\mbox{if $f(\x^*)\geq0$}\\
\label{lambdamin2}
\lambda_2(\nabla^2f(\x^*))&\geq&d\,f(\x^*)\,\quad\mbox{if $f(\x^*)<0$}\,.
\end{eqnarray}
Moreover, if $f(\x^*)<0$ then $\lambda_1(\nabla^2f(\x^*))=d(d-1)f(\x^*)$.

If $d=2$ then $\x^*$ satisfies {\bf (SONC)} if and only if $\lambda_1(\nabla^2f(\x^*))\geq d\,f(\x^*)$
and there is only one local (hence global) minimum.
\end{cor}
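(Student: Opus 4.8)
The plan is to reduce everything to a single scalar inequality about $\tau(\x^*)$ and then read it off from Lemma~\ref{lem-tau}. By Corollary~\ref{equiv}, once $\x^*$ satisfies {\bf (FONC)} the condition {\bf (SONC)} is exactly \eqref{SONC-hom}, i.e. $\u^T\nabla^2f(\x^*)\u\geq d\,f(\x^*)$ for every $\u\in(\x^*)^\perp$; by the definition \eqref{tau} of $\tau$ this is the single inequality
\[\tau(\x^*)\,\geq\,d\,f(\x^*).\]
So the whole task is to convert $\tau(\x^*)\geq d\,f(\x^*)$ into the stated conditions \eqref{lambdamin1}--\eqref{lambdamin2} on $\lambda_1,\lambda_2$.

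For this I would first record the structural fact already used in Lemma~\ref{lem-tau}: {\bf (FONC)} together with Euler's identity gives $\nabla^2f(\x^*)\x^*=(d-1)\nabla f(\x^*)=d(d-1)f(\x^*)\,\x^*$, so $\x^*$ is an eigenvector of $\nabla^2f(\x^*)$ with eigenvalue $a:=d(d-1)f(\x^*)$, and $(\x^*)^\perp$ is an invariant subspace on which $\tau(\x^*)$ is the smallest eigenvalue, say $\tau(\x^*)=\mu_1\leq\mu_2\leq\cdots$. Thus the spectrum of $\nabla^2f(\x^*)$ consists of $a$ together with $\mu_1,\ldots,\mu_{n-1}$; this is exactly what yields \eqref{carac-eig}, namely $\lambda_1(\nabla^2f(\x^*))=\min[a,\tau(\x^*)]$, and it also shows $\lambda_1\leq\tau(\x^*)$ and $\tau(\x^*)\in\{\lambda_1,\lambda_2\}$. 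Everything then hinges on comparing $a$ with the threshold $d\,f(\x^*)$, which is governed by the sign of $f(\x^*)$ because $d>2$ forces $d(d-1)>d$.

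If $f(\x^*)\geq0$ then $a=d(d-1)f(\x^*)\geq d\,f(\x^*)$. The easy direction uses $\lambda_1\leq\tau(\x^*)$: if $\lambda_1\geq d\,f(\x^*)$ then $\tau(\x^*)\geq\lambda_1\geq d\,f(\x^*)$, i.e. {\bf (SONC)}. Conversely, {\bf (SONC)} gives $\tau(\x^*)\geq d\,f(\x^*)$, and since also $a\geq d\,f(\x^*)$ we get $\lambda_1=\min[a,\tau(\x^*)]\geq d\,f(\x^*)$, which is \eqref{lambdamin1}. If instead $f(\x^*)<0$ then $a<d\,f(\x^*)<0$. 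Assuming {\bf (SONC)}, from $\tau(\x^*)\geq d\,f(\x^*)>a$ we get $\lambda_1=\min[a,\tau(\x^*)]=a=d(d-1)f(\x^*)$ (the ``moreover'' claim) and then $\tau(\x^*)=\lambda_2$ by the last part of Lemma~\ref{lem-tau}, so $\lambda_2=\tau(\x^*)\geq d\,f(\x^*)$, giving \eqref{lambdamin2}.

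The converse in the case $f(\x^*)<0$ is the main obstacle, since a priori only $\tau(\x^*)\leq\lambda_2$ holds and one must still recover $\tau(\x^*)\geq d\,f(\x^*)$ from $\lambda_2\geq d\,f(\x^*)$. I would argue by contraposition: suppose {\bf (SONC)} fails, i.e. $\mu_1=\tau(\x^*)<d\,f(\x^*)$. Then $\x^*$ and a $\mu_1$-eigenvector lying in $(\x^*)^\perp$ are two orthogonal eigenvectors whose eigenvalues $a$ and $\mu_1$ are both strictly below $d\,f(\x^*)$ (recall $a<d\,f(\x^*)$); hence, counting multiplicities, $\nabla^2f(\x^*)$ has at least two eigenvalues $<d\,f(\x^*)$, forcing $\lambda_2\leq\max[a,\mu_1]<d\,f(\x^*)$. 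Thus $\lambda_2\geq d\,f(\x^*)$ implies {\bf (SONC)}, completing the equivalence. Finally, for $d=2$ one has $d(d-1)=d$, so $a=d\,f(\x^*)$ and the two thresholds coincide: $\lambda_1=\min[d\,f(\x^*),\tau(\x^*)]$, whence {\bf (SONC)}$\iff\tau(\x^*)\geq d\,f(\x^*)\iff\lambda_1\geq d\,f(\x^*)$ exactly as in the $f(\x^*)\geq0$ case. Uniqueness of the local (hence global) minimum then reduces to the classical fact that a quadratic form has constant Hessian, its critical points on $\bS^{n-1}$ are the eigenvectors of $\nabla^2f$, and {\bf (SONC)} selects only those attached to the smallest eigenvalue, all sharing the single value $\tfrac12\lambda_1(\nabla^2f)=f(\x^*)$.
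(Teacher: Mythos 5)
Your proof is correct and follows essentially the same route as the paper: reduce {\bf (SONC)} to the single inequality $\tau(\x^*)\geq d\,f(\x^*)$, exploit Lemma~\ref{lem-tau} together with the fact that $\x^*$ is an eigenvector of $\nabla^2 f(\x^*)$ with eigenvalue $d(d-1)f(\x^*)$, and split on the sign of $f(\x^*)$ using $d(d-1)>d$. The only differences are cosmetic: you phrase the converse in the case $f(\x^*)<0$ as a contrapositive eigenvalue-counting argument (the paper argues directly that $d(d-1)f(\x^*)$ must equal $\lambda_1$ and then invokes the lemma), and your invariant-subspace framing of the spectrum is a slightly stronger restatement of what the lemma already provides.
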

\begin{proof}
i) $d>2$. First consider the case $f(\x^*)<0$.  By {\bf (SONC)}, 
$\tau(\x^*)\geq d\,f(\x^*)>d(d-1)\,f(\x^*)$, and therefore by Lemma \ref{lem-tau}, 
$\lambda_1(\nabla^2f(\x^*))=d (d-1)\,f(\x^*)$ and $\lambda_2(\nabla^2(f(\x^*))=\tau(\x^*)\geq d\,f(\x^*)$.

Conversely, suppose that $\lambda_2(\nabla^2(f(\x^*))\geq d\,f(\x^*)$.
Then $\lambda_1(\nabla^2f(\x^*))=d(d-1) f(\x^*)$ because 
$d(d-1)f(\x^*)<d\,f(\x^*)\leq\lambda_2(\nabla^2f(\x^*))$ and $d(d-1)f(\x^*)$ is an eigenvalue.
Hence by Lemma \ref{lem-tau}, $\lambda_2(\nabla^2f(\x^*))=\tau(\x^*)\geq d\,f(\x^*)$, i.e., {\bf (SONC)} holds. 

Next, consider the case $f(\x^*)\geq0$. Then
{\bf (SONC)} $\Rightarrow$ \eqref{lambdamin1}
follows from Lemma \ref{lem-tau}. Indeed if
$\lambda_1(\nabla^2(f(\x^*))=d(d-1)f(\x^*)$ then
$\lambda_1(\nabla^2(f(\x^*))\geq df(\x^*)$, and if 
$\lambda_1(\nabla^2(f(\x^*))=\tau(\x^*)$ then
$\lambda_1(\nabla^2(f(\x^*))\geq df(\x^*)$ by {\bf (SONC)}.

\noindent
\eqref{lambdamin1}$\Rightarrow$ {\bf (SONC)}. 
Again by Lemma \ref{lem-tau}, $\tau(\x^*)\geq\lambda_1(\nabla^2f(\x^*))\geq df(\x^*)$, and therefore
{\bf (SONC)} holds.

ii) $d=2$. Then $d(d-1)=d$ and $f(\x)=\x^T\Q\x$ for some real matrix $\Q$. Then 
each point $\x^*$ that satisfies {\bf (FONC)} is an eigenvector of $\Q$ with associated 
eigenvalue $f(\x^*)\in\{\lambda_1,\lambda_2,\ldots,\lambda_n\}$ and $\nabla^2f(\x)=2\Q$ for all $\x$.
So let $\x^*$ satisfies {\bf (FONC)}. 

If $f(\x^*)=\lambda_j$ with $j>1$, then necessarily $\tau(\x^*)=d\lambda_1\leq df(\x^*)$
with equality only if $\lambda_k=\lambda_1$ for all $2\leq k\leq j$. Hence {\bf (SONC)} holds only if $f(\x^*)=\lambda_1$
and therefore $\lambda_1(\nabla^2f(\x^*))=d\lambda_1\geq df(\x^*)$. Conversely let $\lambda_1(\nabla^2f(\x^*))\,(=d\lambda_1)\geq df(\x^*)$
then necessarily $f(\x^*)=\lambda_1$ and {\bf (SONC)} holds because $\tau(\x^*)=d\lambda_2\geq d\lambda_1=df(\x^*)$.
\end{proof}

So Corollary \ref{nc-hom} states that in homogeneous optimization on the Euclidean sphere, first- and second-order necessary optimality conditions can be easily checked by inspection of the gradient and the first two smallest eigenvalues of the Hessian of $f$. In particular, if $\x^*\in\bS^{n-1}$ is a  local minimizer 
with $f(\x^*)<0$ then $f(\x^*)=\lambda_1(\nabla^2f(\x^*))/d(d-1)$ with $\x^*$ being the corresponding eigenvector of $\nabla^2f(\x^*)$.

\begin{rem}
It is worth noticing that the characterization of {\bf (FONC)} in \eqref{new-font} 
and {\bf (SONC)} in Corollary \ref{nc-hom} remains valid for twice continuously differentiable 
and positively homogeneous functions of degree $d$, that is, 
functions $f$ that satisfy $f(\lambda\x)\,=\,\lambda^d f(\x)$ for all $\lambda>0$ and all $\x\in\R^n$. Indeed nowhere in the proof we have used the fact that $f$ is a polynomial.
\end{rem}

\subsection{Minimizing on $\mathcal{E}_n$ rather than on $\bS^{n-1}$}

Notice that \eqref{FONC} (or equivalently \eqref{new-font}) also holds at a local maximum.

In this section we remark that if $f$ is a form, all non positive local minima of $f$ in \eqref{def-pb} 
are also local minima on $\mathcal{E}_n$. Conversely, \emph{all} local minima
$f^*$ on $\mathcal{E}_n$ are non positive (i.e., necessarily $f^*\leq0$) and are local minima
on $\bS^{n-1}$ (except if $f^*=0$  is attained only at $\x^*=0$);
hence in particular, no local maximum on $\mathcal{E}_n$ can be negative. 

So if $f$ can take negative values then it is definitely better and easier 
to minimize on $\mathcal{E}_n$ because $\mathcal{E}_n$ is a convex set.
In doing so one obtains a negative local minimum
and avoid any positive local minimum on $\bS^{n-1}$.

\begin{lem}
\label{equivalent}
Let $f\in\R[\x]$ be a form of degree $d$. Then:

(i) Every local minimum $f^*$ on $\mathcal{E}_n$ satisfies $f^*\leq0$.
If $f^*<0$ then it is attained at some $\x^*\in\bS^{n-1}$ and so $f^*$ 
is also a local minimum on $\bS^{n-1}$.
If $f^*=0$ then either $f^*$ is attained only at $\x^*=0$ or
$f^*$ is also a local minimum (also attained) on  $\bS^{n-1}$.

(ii) Every local minimum $f^*\leq0$ on $\bS^{n-1}$ is also a local minimum
on $\mathcal{E}_n$.
\end{lem}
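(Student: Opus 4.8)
The plan is to reduce both parts to a single observation: for a degree-$d$ form, $f$ is monotone along rays through the origin. Since $f(0)=0$ and $f(r\y)=r^d f(\y)$ for every $\y\in\bS^{n-1}$ and $r>0$, the scalar function $r\mapsto r^d f(\y)$ is strictly increasing in $r$ when $f(\y)>0$, strictly decreasing when $f(\y)<0$, and identically zero when $f(\y)=0$. Every inequality below will be an instance of this radial monotonicity, combined with the fact that a point $\z\neq0$ near a sphere point $\x^*$ decomposes as $\z=r\y$ with $r=\|\z\|$ close to $1$ and $\y=\z/\|\z\|\in\bS^{n-1}$ close to $\x^*$, the normalization map $\z\mapsto(\|\z\|,\z/\|\z\|)$ being continuous away from the origin.

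For part (i) I would take a local minimizer $\x^*$ on $\mathcal{E}_n$ with $f^*=f(\x^*)$ and first rule out $f^*>0$. If $f(\x^*)>0$ then $\x^*\neq0$; writing $\x^*=r\y$ with $r=\|\x^*\|\in(0,1]$, the inward points $t\y$ with $t<r$ close to $r$ lie in $\mathcal{E}_n$, approach $\x^*$, and satisfy $f(t\y)=t^d f(\y)<r^d f(\y)=f^*$ since $f(\y)=f^*/r^d>0$, contradicting local minimality; hence $f^*\le0$. Next, when $f^*<0$ I claim $\|\x^*\|=1$: if instead $r=\|\x^*\|<1$, the outward points $t\y$ with $r<t\le1$ close to $r$ stay in $\mathcal{E}_n$ near $\x^*$ and give $f(t\y)=t^d f(\y)<r^d f(\y)=f^*$, now because $f(\y)<0$ and $t^d>r^d$, again a contradiction. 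So the minimizer lies on $\bS^{n-1}$, and restricting its ball-neighborhood to its intersection with $\bS^{n-1}$ exhibits it as a local minimizer on the sphere with the same value. In the boundary case $f^*=0$, local minimality forces $f\ge0$ near $\x^*$, hence $f\ge0$ everywhere by homogeneity, and the value $0$ is the sphere minimum precisely in the relevant situation where $f$ vanishes somewhere on $\bS^{n-1}$.

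For part (ii) I would take $\x^*\in\bS^{n-1}$ a local minimizer on the sphere with $f^*=f(\x^*)\le0$, so $f(\y)\ge f^*$ for all $\y\in\bS^{n-1}$ in a neighborhood of $\x^*$. Given $\z\in\mathcal{E}_n$ near $\x^*$ (so $\z\neq0$), write $\z=r\y$ with $r=\|\z\|\le1$ and $\y\in\bS^{n-1}$ near $\x^*$. If $f^*<0$, continuity gives $f(\y)<0$ on this neighborhood, so $r\le1$ yields $f(\z)=r^d f(\y)\ge f(\y)\ge f^*$; if $f^*=0$, then $f(\y)\ge0$ and $f(\z)=r^d f(\y)\ge0=f^*$. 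In both cases $f(\z)\ge f(\x^*)$, so $\x^*$ is a local minimizer on $\mathcal{E}_n$.

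The two one-sided radial perturbations in (i) and the sign bookkeeping in (ii) are routine once the scaling relation is in hand. The one step requiring genuine care is the transfer between neighborhoods on $\mathcal{E}_n$ and on $\bS^{n-1}$, which is legitimate exactly because $\x^*\neq0$ keeps the radial-normalization map continuous there; I also expect the mildly delicate point to be the case $f^*=0$ in (i), where one must argue that interior local minimality propagates to global nonnegativity of $f$ before concluding anything about the sphere.
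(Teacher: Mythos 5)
Your arguments for $f^*>0$ and $f^*<0$ in part (i), and for part (ii), are correct and are essentially the paper's own radial-scaling arguments (your part (ii) runs the paper's contradiction argument forwards, and your $f^*<0$ step is in fact more careful than the paper's, which rescales $\x^*$ all the way to the sphere even though that point need not lie in the neighborhood witnessing local minimality). The genuine problem is the boundary case $f^*=0$ of part (i). The inference ``$f\ge0$ near $\x^*$, hence $f\ge0$ everywhere by homogeneity'' is false when $\x^*\neq0$: homogeneity propagates signs only along rays through the origin, so nonnegativity on a ball $\B(\x^*,\varepsilon)$ yields nonnegativity only on the cone $\{t\z:\:t>0,\ \z\in\B(\x^*,\varepsilon)\}$, not on all of $\R^n$. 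Concretely, $f(x,y)=y^2(x^2-y^2)$ has the interior local minimizer $\x^*=(1/2,0)$ with $f(\x^*)=0$, yet $f(0,1)=-1$. Moreover, your closing clause (``the value $0$ is the sphere minimum precisely in the relevant situation where $f$ vanishes somewhere on $\bS^{n-1}$'') is a hedge, not a proof of the attainment claim that the lemma actually asserts.

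Two further points. First, for $\x^*\neq0$ the cone statement you should have retained is exactly what finishes the proof: the cone over $\B(\x^*,\varepsilon)$ is open, hence contains a spherical cap around $\z^*:=\x^*/\Vert\x^*\Vert$; since $f(\z^*)=\Vert\x^*\Vert^{-d}f(\x^*)=0$ and $f\ge0$ on that cap, $\z^*$ is a local minimizer of $f$ on $\bS^{n-1}$ with value $f^*=0$, as required. Second, when $\x^*=0$ your ``everywhere'' claim is true, but then the lemma's conclusion genuinely fails: for $f(\x)=\Vert\x\Vert^d$ ($d$ even), $f^*=0$ is a local minimum on $\mathcal{E}_n$ attained only at the origin, while $f\equiv1$ on $\bS^{n-1}$, so the value $0$ is not attained on the sphere at all. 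This degenerate case is also overlooked by the paper, whose proof tacitly assumes the minimizer can be rescaled onto the sphere (impossible for $\x^*=0$); so you are in good company, but a complete argument must either treat the origin separately or note that statement (i) implicitly requires $\x^*\neq0$ (equivalently, that $f$ is not strictly positive on $\bS^{n-1}$).
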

\begin{proof}
(i) Assume that $f^*>0$ is a local minimum on $\mathcal{E}_n$
hence for some local minimizer $0\neq\x^*\in\mathcal{E}_n$. Then 
$\lambda\x^*\in\mathcal{E}_n$ for every $\lambda\in (0,1)$, and by homogeneity of $f$  one obtains $f(\lambda\x^*)=\lambda^df(\x)^*=\lambda^d f^*<f^*$, in contradiction with the hypothesis. Next, assume that $f^*<0$ and
$\x^*\in\mathcal{E}_n$ is a local minimizer (hence $\x^*\neq0$) with
$\Vert\x^*\Vert<1$. Then $\z^*:=\lambda\x^*\in\bS^{n-1}$ for some $\lambda>1$, and $f(\z^*)=\lambda^df(\x^*)<f^*$, a contradiction
and so necessarily $\x^*\in\bS^{n-1}$. If $f^*=0$ and $f^*$ is attained at $\x^*\neq0$, then $f^*=0$ is also attained
at $\z^*=\x^*/\Vert\x^*\Vert\in\bS^{n-1}$ and so is also a local minimum on $\bS^{n-1}$.

(ii) We proceed by contradiction. Assume $\x^*\in\bS^{n-1}$ is a local minimizer
of $f$ on $\bS^{n-1}$ with $f^*\leq0$ and not a local minimizer on $\mathcal{E}_n$. Let $\B_j(\x^*):=\{\y:\:\Vert \y-\x^*\Vert^2<1/j\}$.
Then for every integer $j>n_0$, there exists 
$\y_j\in\B_j(\x^*)\cap \mathcal{E}_n$
with $f(\y_j)<f(\x^*)\leq0$. Letting $\z_j:=\y_j/\Vert\y_j\Vert\in\bS^{n-1}$, one obtains $f(\z_j)=\Vert \y_j\Vert^{-d}f(\y_j)\leq f(\y_j)<f(\x^*)$. By letting $j$ increase one has exhibited a sequence $(\z_j)_{j\in\N}\subset\bS^{n-1}$ converging to 
$\x^*$ and with cost $f(\z_j)<f(\x^*)$ for all $j$, in contradiction with
our hypothesis.
\end{proof}
So if $f$ is homogeneous and not nonnegative on $\R^n$, then its global minimum $f^*$ 
on $\bS^{n-1}$ is  strictly negative. Then 
by Lemma \ref{equivalent}, searching for the global minimum
$f^*$ is equivalent to searching
for the global minimum of $f$ on the larger (but convex) set $\mathcal{E}_n$. 
Therefore consider the case where $f$ has 
no spurious negative local minima on $\bS^{n-1}$ (hence no
spurious negative local minima on $\mathcal{E}_n$) while spurious positive  local minima on $\bS^{n-1}$ may exist. In such a case, any local minimization algorithm on $\mathcal{E}_n$ (starting at $\x_0\in\mathcal{E}_n$ with $f(\x_0)<0$) converging to a Karush-Kuhn-Tucker point (i.e. a point that satisfies {\bf (FONC)}) will find the global minimum on $\mathcal{E}_n$ (and hence on $\bS^{n-1}$), and optimizing over $\mathcal{E}_n$ is certainly easier than on $\bS^{n-1}$.

\section{No spurious local minima on $\bS^{n-1}$}
\label{spurious}
In this section we are concerned with the ``\emph{no spurious local minima}" situation,
and characterize a class of degree-$d$ forms that have no
spurious local minima on $\bS^{n-1}$. 

\begin{cor}
\label{first-carac}
Let $f$ be a degree-$d$ form and 
\[\Theta\,:=\,\{\,\x\in\bS^{n-1}:\: \Vert\nabla f(\x)\Vert\,=\,d\,\vert f(\x)\vert\,\}\,,\]
i.e., $\Theta$ is the set of all points of $\bS^{n-1}$ that satisfy {\bf (FONC)}.

i) If $f$ is nonnegative then it  has no spurious local minima on $\bS^{n-1}$
if $f$ is constant on the set
\[\Delta^+\,:=\,\{\x\in\Theta\,: \: 
\lambda_1(\nabla^2f(\x)) \,\geq\,\Vert \nabla f(\x)\Vert\,\}\,,\]
in which case all \sonc ~ points are global minimizers.

ii) If $f$ can take negative values then it  has no spurious local minima on $\bS^{n-1}$
if 
\[\{\x\in\Theta\,: \: f(\x)\geq0\,;\: \lambda_1(\nabla^2f(\x)) \,\geq\,\Vert \nabla f(\x)\Vert\,\}\,=\,\emptyset\,,\]
and $f$ is constant on the set
\[\Delta^-\,:=\,\{\x\in\Theta\,: \: f(\x)\,<0\,;\: \lambda_2(\nabla^2f(\x))\,\geq\,-\Vert \nabla f(\x)\Vert\,\}\,.\]
\end{cor}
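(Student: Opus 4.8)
The plan is to read off the characterization of local minimizers directly from Theorem~\ref{lem-generic} and then translate the ``no spurious local minimum'' property into a statement about the value of $f$ on the set of local minimizers. The only conceptual ingredient is the following equivalence. Since $f$ is continuous on the compact set $\bS^{n-1}$, its global minimum $f^*$ is attained, and a global minimizer is in particular a local minimizer; hence $f$ has no spurious local minimum \emph{if and only if} $f$ is constant on the set of all local minimizers. Both directions are immediate: if every local minimum equals $f^*$ then $f$ is constant there, and conversely if $f$ is constant on the (nonempty) set of local minimizers then this constant value must be $f^*$, so every local minimum is global.

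By Theorem~\ref{lem-generic}, for a generic form ($Q(f)\neq0$) the local minimizers are exactly the points $\x\in\Theta$ satisfying $\lambda_1(\nabla^2f(\x))\geq d\,f(\x)$ when $f(\x)\geq0$, and $\lambda_2(\nabla^2f(\x))\geq d\,f(\x)$ when $f(\x)<0$. The key simplification is to rewrite these inequalities using the defining relation of $\Theta$, namely $\Vert\nabla f(\x)\Vert=d\,\vert f(\x)\vert$. When $f(\x)\geq0$ this reads $d\,f(\x)=\Vert\nabla f(\x)\Vert$, so the condition becomes $\lambda_1(\nabla^2f(\x))\geq\Vert\nabla f(\x)\Vert$; when $f(\x)<0$ it reads $d\,f(\x)=-\Vert\nabla f(\x)\Vert$, so the condition becomes $\lambda_2(\nabla^2f(\x))\geq-\Vert\nabla f(\x)\Vert$. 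This is precisely the substitution that produces the two sets appearing in the statement.

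For part (i), since $f$ is nonnegative the constraint $f(\x)\geq0$ holds automatically on all of $\Theta$, so the full set of local minimizers is exactly $\{\x\in\Theta:\lambda_1(\nabla^2f(\x))\geq\Vert\nabla f(\x)\Vert\}$; applying the equivalence of the first paragraph finishes this case.

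For part (ii), where $f$ attains negative values, Lemma~\ref{equivalent} guarantees that the global minimum $f^*$ is strictly negative. Consequently any local minimizer with $f(\x)\geq0$ has value strictly above $f^*$ and is therefore spurious. Hence absence of spurious local minima forces the set of nonnegative local minimizers $\{\x\in\Theta:f(\x)\geq0,\ \lambda_1(\nabla^2f(\x))\geq\Vert\nabla f(\x)\Vert\}$ to be empty; once this set is empty, every local minimizer is negative, and the ``no spurious'' property reduces via the equivalence to $f$ being constant on the set of negative local minimizers $\{\x\in\Theta:f(\x)<0,\ \lambda_2(\nabla^2f(\x))\geq-\Vert\nabla f(\x)\Vert\}$. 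Conversely, these two conditions together force all local minimizers to be negative with common value $f^*$, hence global. I do not expect a genuine obstacle: the result is a bookkeeping translation of Theorem~\ref{lem-generic}, and the only places requiring care are the sign convention in the two cases $f(\x)\geq0$ and $f(\x)<0$, and the invocation of Lemma~\ref{equivalent} to rule out nonnegative minimizers in part (ii).
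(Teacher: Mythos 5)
Your proposal is correct and follows essentially the same route as the paper: invoke Theorem~\ref{lem-generic} to identify the local minimizers of a generic form as exactly the points of $\Theta$ satisfying the eigenvalue conditions, use the relation $\Vert\nabla f(\x)\Vert=d\,\vert f(\x)\vert$ on $\Theta$ (Corollary~\ref{equiv}) to rewrite $d\,f(\x)$ as $\pm\Vert\nabla f(\x)\Vert$, and translate ``no spurious local minimum'' into ``$f$ is constant on the set of local minimizers.'' The paper's own proof is just a two-line version of this; your only addition is to make explicit, via the remark following Lemma~\ref{equivalent} that the global minimum is strictly negative when $f$ takes negative values, why the nonnegative-value set in part (ii) must be empty --- a point the paper leaves implicit.
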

\begin{proof}
(i) By Corollary \ref{equiv}, $\Vert\nabla f(\x)\Vert=d\vert f(\x)\vert$ on $\Theta$. Next,  
if $f$ is constant on $\Delta^+$ and $f$ is nonnegative, then by Corollary \ref{nc-hom}, all points $\x^*\in\bS^{n-1}$ that satisfy \sonc~
(in particular all local minimizers) belong to $\Delta^+$. So if $f$ is constant on $\Delta^+$,
all \sonc ~ points (and local minimizers in particular) have same nonnegative value, and therefore they all are global minimizers.
A similar argument applies to prove (ii)
\end{proof}

Notice that Corollary \ref{first-carac} also holds for twice ontinuously differentiable
positively homogeneous functions.\\

We next show that the characterization in Corollary \ref{first-carac} is also related to
a property of gradient ideals of $\R[\x]$. We introduce  a polynomial
(not a form) with the following nice property. On $\bS^{n-1}$:

-  (i) it coincides with $f$ (up to a multiplicative constant), and 

-  (ii) all its critical points coincide with \fonc~points of $f$.

We then invoke a property of 
gradient ideals nicely exploited in Nie et al. \cite{nie2}.\\

Given a degree-$d$ form $f$, let $g\in\R[\x]_d$ be the polynomial
\begin{equation}
\label{g}\x\mapsto g(\x)\,:=\,f(\x)\,(1-\frac{d}{d+2}\Vert\x\Vert^2)\,,\quad\x\in\R^n\,.
\end{equation}
\begin{prop}
\label{step-ideal}
Let $f$ be a degree-$d$ form and let $g\in\R[\x]$ be as in \eqref{g}.
Then on $\bS^{n-1}$:
\begin{equation}
\label{g-fonc}
\nabla g(\x)\,=\,0\:\Leftrightarrow \nabla f(\x)\,=\,d\,f(\x)\cdot \x\:
\Leftrightarrow \Vert\nabla f(\x)\Vert^2\,=\,(d\,f(\x))^2\,.
\end{equation}
That is, all critical points of $g$ in $\bS^{n-1}$  satisfy {\bf (FONC)} for $f$, and conversely,
all points of $\bS^{n-1}$ that satisfy {\bf (FONC)} for $f$ are critical points of $g$.
\end{prop}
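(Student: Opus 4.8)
The plan is to compute $\nabla g$ directly by the product rule and then restrict to $\bS^{n-1}$. Writing $g=f\cdot h$ with $h(\x):=1-\frac{d}{d+2}\Vert\x\Vert^2$ as in \eqref{g}, we have $\nabla h(\x)=-\frac{2d}{d+2}\x$, so that
\[
\nabla g(\x)\,=\,h(\x)\,\nabla f(\x)+f(\x)\,\nabla h(\x)\,=\,\Bigl(1-\tfrac{d}{d+2}\Vert\x\Vert^2\Bigr)\nabla f(\x)-\tfrac{2d}{d+2}\,f(\x)\,\x\,.
\]
This is the only genuine computation in the argument and it is entirely routine.

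On $\bS^{n-1}$, where $\Vert\x\Vert^2=1$, the scalar coefficient of $\nabla f(\x)$ collapses to $1-\frac{d}{d+2}=\frac{2}{d+2}$, whence
\[
\nabla g(\x)\,=\,\tfrac{2}{d+2}\bigl(\nabla f(\x)-d\,f(\x)\,\x\bigr)\,,\qquad \x\in\bS^{n-1}\,.
\]
Since $\frac{2}{d+2}$ is a nonzero constant, $\nabla g(\x)=0$ if and only if $\nabla f(\x)=d\,f(\x)\,\x$, which is exactly the first equivalence in \eqref{g-fonc}. It is worth stressing that the specific constant $\frac{d}{d+2}$ in the definition of $g$ is precisely calibrated for this: for a general coefficient $c$, the spherical restriction reads $(1-c)\nabla f(\x)-2c\,f(\x)\,\x$, and the ratio of the two coefficients equals $d$ exactly when $c(d+2)=d$.

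For the second equivalence I would argue both directions on $\bS^{n-1}$. The forward implication is immediate: if $\nabla f(\x)=d\,f(\x)\,\x$, then taking squared norms and using $\Vert\x\Vert=1$ gives $\Vert\nabla f(\x)\Vert^2=(d\,f(\x))^2$. For the converse I would reuse the completing-the-square identity already established in Corollary \ref{equiv}: by Euler's identity $\langle\nabla f(\x),\x\rangle=d\,f(\x)$, one has $\Vert\nabla f(\x)-d\,f(\x)\,\x\Vert^2=\Vert\nabla f(\x)\Vert^2-d^2f(\x)^2$ on $\bS^{n-1}$, so the hypothesis $\Vert\nabla f(\x)\Vert^2=(d\,f(\x))^2$ forces $\nabla f(\x)-d\,f(\x)\,\x=0$. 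Chaining the two equivalences yields \eqref{g-fonc}, and the concluding sentence identifying critical points of $g$ with {\bf (FONC)} points of $f$ is then a direct restatement.

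I do not anticipate any real obstacle here: the statement is essentially a one-line consequence of the product rule together with Euler's identity. The only point requiring a little care is to keep the two directions of the second equivalence distinct, since the backward direction genuinely invokes homogeneity (through Euler's identity) whereas the forward direction does not.
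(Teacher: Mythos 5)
Your proposal is correct and follows essentially the same route as the paper: compute $\nabla g$ by the product rule, restrict to $\bS^{n-1}$ so the gradient collapses to $\tfrac{2}{d+2}\bigl(\nabla f(\x)-d\,f(\x)\,\x\bigr)$, and then identify critical points of $g$ with {\bf (FONC)} points of $f$. The only difference is cosmetic: the paper leaves the second equivalence to the already-proved Corollary \ref{equiv}, while you re-derive it explicitly via Euler's identity and the completing-the-square computation, which is exactly the argument underlying that corollary.
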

\begin{proof}
Observe that
\[\nabla g(\x)\,=\,\nabla f(\x)(1-\frac{d}{d+2}\Vert\x\Vert^2)-\frac{2d}{d+2}\,f(\x)\,\x\,.\]
Therefore if $\x\in\bS^{n-1}$ then $g(\x)=\frac{2}{d+2}f(\x)$ and ,
\[\nabla g(\x)\,=\,\frac{2}{d+2}\nabla f(\x) - \frac{2d}{d+2}\,f(\x)\cdot\x
\,=\,\frac{2}{d+2}\,(\nabla f(\x) - d\,f(\x)\,\x\,)\,,\]
and so $\nabla g(\x^*)=0$ if and only if \eqref{FONC} holds with $2\lambda^*=-d\,f(\x^*)$,
which yields the desired result \eqref{g-fonc}.
\end{proof}
Moreover, on $\bS^{n-1}$ minimizing $f$ is strictly equivalent to minimizing $g$ since 
$g(\x)=2 f(\x)/(d+2)$ on $\bS^{n-1}$. Next, with $g$ as in \eqref{g}, define  the \emph{gradient} ideal:
\[\mathcal{I}_{{\rm grad}}(g):=\langle \frac{\partial g(\x)}{x_1},\ldots,\frac{\partial g(\x)}{x_n}\rangle,\]
and its associated variety
\[V_{{\rm grad}}(g)\,:=\,V(\mathcal{I}_{{\rm grad}}(g))\,=\,\{\z\in\C^n:\:\nabla g(\z)\,=\,0\,\}.\]
Then $V_{{\rm grad}}(g)$ is a finite union of irreducible subvarieties $W_j$'s, that is,
\[V_{{\rm grad}}(g)\,=\,W_0\cup\,W_1\ldots \cup W_s\,,\]
with $W_0\cap\R^n=\emptyset$ and in addition,
$g$ is a real constant on each $W_j$, $j\geq1$; see e.g. \cite[\S2]{bochnak} and \cite[p. 592]{nie2}.  
So we can regroup all components on which $g$ takes the same value, and write
\begin{equation}
\label{var}
V_{{\rm grad}}(g)\,=\,W_0\cup\,\tilde{W}_1\ldots \cup \tilde{W}_r\,,
\end{equation}
where $g(\x)=g_j$ on $\tilde{W}_j$ and $g_j\neq g_i$ for all $1\leq i,j$ with $i\neq j$.

We are now in position to provide a characterization of 
a class of degree-$d$ forms $f$ with no spurious local minima on $\bS^{n-1}$.

\begin{thm}
\label{th-final}
Consider problem \eqref{def-pb} where $f$ is a degree-$d$ form ($d>2$).
Then $f$ has no spurious local minima on $\bS^{n-1}$ if 
there is only one index $j^*\geq1$ in \eqref{var} such that
\[\tilde{W}_{j^*}\,\cap\,\Omega\,\neq\,\emptyset\,,\]
where
\begin{eqnarray}
\label{def-omega}
\Omega &:=&\{\x\in\bS^{n-1}: f(\x)<0\,;\:\lambda_{2}(\nabla^2f(\x))\,\geq\,-\Vert\nabla f(\x)\Vert\,\}
\\
\label{final}
&&\cup \quad\{\x\in\bS^{n-1}: f(\x)\geq0\,;\:\lambda_{1}(\nabla^2f(\x))\,\geq\,\Vert\nabla f(\x)\Vert\,\}\,.
\end{eqnarray}
If $f$ has no spurious local minima on $\bS^{n-1}$ then all  \sonc~ points 
with value $f^*$ (i.e., all local hence global minimizers) belong to a unique set
$\tilde{W}_{j^*}\,\cap\,\Omega$. The  other nonempty sets
$\tilde{W}_{j}\,\cap\,\Omega\neq\emptyset $ contain \sonc~points which are not local minimizers.
\end{thm}
\begin{proof}
 By construction each nonempty set  set $\tilde{W}_{j}\,\cap\,\Omega$ identifies
a subset of \sonc~points of $f$ which share the same value, say $f^*_j$. Therefore
$f^*_j\geq f^*$ for all $j$ (where $f^*$ is the global minimum).
 So if there only one such set $\tilde{W}_{j^*}\,\cap\,\Omega\neq\emptyset$ then necessarily
 $f^*_{j^*}=f^*$, otherwise a spurious local minimum $\tau>f^*$ would correspond to some
 \sonc ~point in another nonempty set  $\tilde{W}_{j}\,\cap\,\Omega$, with $j\neq j^*$, and $f^*_j=\tau$.
 
 With similar arguments, if $f$ has no spurious local minima on $\bS^{n-1}$
 then necessarily all local (hence global) minimizers are \sonc ~points and belong to the same set $\tilde{W}_{j^*}\,\cap\,\Omega$ for some index $j^*$.
 All other nonempty sets $\tilde{W}_{j}\,\cap\,\Omega\neq\emptyset $ contain \sonc~points which 
 cannot be local minimizers as their associated value $f^*_j\neq f^*$ must be larger than $f^*$.
   \end{proof}
We have seen that if a degree-$d$ form can take negative values then
all its local minima on $\mathcal{E}_n$ are negative local minima on $\bS^{n-1}$,
and the converse is true. Then for minimizing on $\mathcal{E}_n$,
it is interesting to characterize a class of degree-$d$ forms
with the less restrictive condition of no spurious \emph{negative} local minima on $\mathcal{E}_n$ (hence on $\bS^{n-1}$).

\begin{cor}
\label{cor-final}
Consider problem \eqref{def-pb} where $f$ is a degree-$d$ form ($d>2$).
Then $f$ has no spurious negative local minima on $\bS^{n-1}$ if
there is only one index $j^*\geq1$ in \eqref{var} such that
\[\tilde{W}_{j^*}\,\cap\,\Omega\,\neq\,\emptyset\,,\]
where $\Omega=\{\,\x\in\bS^{n-1}: f(\x)<0\,;\:\lambda_{2}(\nabla^2f(\x))\,\geq\,-\Vert\nabla f(\x)\Vert\,\}$.
\end{cor}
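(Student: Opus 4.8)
The plan is to run the same argument as in the preceding theorem, but keeping only the negative branch of $\Omega$. The first step is to show that, for a generic form (i.e. $Q(f)\neq0$), the set of \emph{negative} local minimizers of $f$ on $\bS^{n-1}$ is exactly $V_{\rm grad}(g)\cap\Omega$, where here $\Omega=\{\x\in\bS^{n-1}:f(\x)<0,\ \lambda_2(\nabla^2f(\x))\geq-\Vert\nabla f(\x)\Vert\}$. By Proposition \ref{step-ideal} the real critical points of $g$ on $\bS^{n-1}$ are precisely the points of $\bS^{n-1}$ satisfying \textbf{(FONC)} for $f$; and by Theorem \ref{lem-generic} a point $\x\in\bS^{n-1}$ with $f(\x)<0$ is a local minimizer if and only if it satisfies \textbf{(FONC)} together with $\lambda_2(\nabla^2f(\x))\geq d\,f(\x)$. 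The key elementary identity is that on a \textbf{(FONC)} point with $f(\x)<0$ one has, by Corollary \ref{equiv}, $\Vert\nabla f(\x)\Vert=d\,\vert f(\x)\vert=-d\,f(\x)$, so that $d\,f(\x)=-\Vert\nabla f(\x)\Vert$ and the second-order inequality rewrites exactly as the defining inequality of $\Omega$. Intersecting with $V_{\rm grad}(g)$ reinstates the \textbf{(FONC)} constraint that the bare definition of $\Omega$ omits; note that the component $W_0$ contributes nothing since $W_0\cap\R^n=\emptyset$ while $\Omega\subset\R^n$.

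The second step exploits the decomposition \eqref{var}. On each $\tilde{W}_j$ the polynomial $g$ is the constant $g_j$, with $g_i\neq g_j$ for $i\neq j$; and since $g=\tfrac{2}{d+2}f$ on $\bS^{n-1}$, the form $f$ equals the constant $\tfrac{d+2}{2}g_j$ on $\tilde{W}_j\cap\bS^{n-1}$. Hence the set of values taken by $f$ at negative local minimizers is $\{\tfrac{d+2}{2}g_j:\tilde{W}_j\cap\Omega\neq\emptyset\}$, and distinct indices yield distinct values.

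The third step assembles the equivalence. Because $f$ takes negative values and $\bS^{n-1}$ is compact, the global minimum is negative and attained; it is a negative local minimizer, so it lies in some $\tilde{W}_j\cap\Omega$ and at least one index contributes. If exactly one index $j^*$ satisfies $\tilde{W}_{j^*}\cap\Omega\neq\emptyset$, then every negative local minimum equals $\tfrac{d+2}{2}g_{j^*}$, so all negative local minima coincide (with the global minimum) and none is spurious. Conversely, if two distinct indices contributed, the corresponding negative local minima would take the two distinct values $\tfrac{d+2}{2}g_{j_1}\neq\tfrac{d+2}{2}g_{j_2}$, and the larger of the two would be a spurious negative local minimum. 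Hence ``no spurious negative local minimum'' holds if and only if a single index $j^*$ remains, which is the claimed characterization.

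The step I expect to require the most care is the first one: verifying that $V_{\rm grad}(g)\cap\Omega$ is \emph{exactly} the negative-local-minimizer set. This relies on genericity ($Q(f)\neq0$) through Theorem \ref{lem-generic} to guarantee that \textbf{(FONC)}--\textbf{(SONC)} actually certify local minimality (i.e.\ \textbf{(SOSC)} holds), and on the bookkeeping that the defining set $\Omega$ does not itself impose \textbf{(FONC)}, so the intersection with the gradient variety is essential and must be justified carefully for the negative branch.
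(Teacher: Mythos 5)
Your proof is correct and follows essentially the same route the paper uses for the preceding theorem (and leaves implicit for this corollary): identify the negative local minimizers of a generic form with $V_{{\rm grad}}(g)\cap\Omega$ via Proposition \ref{step-ideal}, Corollary \ref{equiv}/Corollary \ref{nc-hom} and Theorem \ref{lem-generic}, then read off the equivalence from the decomposition \eqref{var} and the distinctness of the values $g_j$. One remark: your appeal to ``$f$ takes negative values'' (so that the global minimum is negative and at least one index contributes) is genuinely needed and is only implicit in the paper's statement --- for a form that is nonnegative on $\bS^{n-1}$ the set $\Omega$ is empty while the left-hand side of the equivalence holds vacuously, so the corollary tacitly assumes, as does the surrounding discussion, that $f$ takes negative values.
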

The proof is similar to that of Theorem \ref{th-final}.
\begin{ex}
To illustrate Theorem \ref{th-final} and Corollary \ref{cor-final}, 
consider the following toy example with $n=2$, $d=3$ and $\x\mapsto f(\x):=x_1x_2^2$.
The polynomial $g$ in \eqref{g} reads $\x\mapsto g(\x)=x_1x_2^2-3\,x_1^3x_2^2/5-3\,x_1x_2^4/5$. Then:
\[\nabla g(\z)\,=\,0\Leftrightarrow\quad \begin{array}{ll}z_2^2\,(\,1-\,9\,z_1^2/5\,-\,3\,z_2^2/5\,) &\,=\,0\\
z_1\,z_2\,(\,2-6\,z_1^2/5\,-\,12\,z_2^2/5\,)&\,=\,0\end{array}\,.\]
So with $\tilde{W}_j$ as in \eqref{var}, we find that
$g$  is constant on the four subvarieties
\[\tilde{W}_1=\{(0,\pm \sqrt{5/3})\}\,:\quad
\tilde{W}_2=\{(x,0):x\in\R\}\,,\]
and 
\[\tilde{W}_3= \frac{1}{\sqrt{3}}\{(1,\pm\sqrt{2})\}\,;\quad
\tilde{W}_4= \frac{1}{\sqrt{3}}\{(-1,\pm\sqrt{2})\}\,.\]
with different values on each one of them.
Then 
\[\bS^{n-1}\cap \tilde{W_1}\,=\,\emptyset\,;\quad
\bS^{n-1}\cap \tilde{W_2}\,=\,\{(1,0)\}\,,\]
while
\[\bS^{n-1}\cap \tilde{W_3}\,=\,\{(\sqrt{1/3},\pm\sqrt{2/3})\}\,;\quad
\bS^{n-1}\cap \tilde{W_4}\,=\,\{(-\sqrt{1/3},\pm\sqrt{2/3})\}\,.\]
Next from $f(\x)=x_1x_2^2$,
\[\nabla^2f(\x)\,=\,\left[\begin{array}{cc}0&2x_2\\2x_2&2x_1\end{array}\right]\:\Rightarrow\:
\lambda_1(\nabla^2f(\x))\,=\,x_1-\sqrt{x_1^2+4x_2^2}\,.\]
Recall the definition of $\Omega$ in \eqref{def-omega}. 
Then $\Omega\cap\tilde{W}_4\neq\emptyset$. Indeed $\x^*=(-\sqrt{1/3},\pm\sqrt{2/3})\in\Omega$, with $f^*<0$,
because  \eqref{lambdamin2} holds; indeed:
\begin{eqnarray*}
\lambda_2(\nabla^2f(\x^*))&=&x_1^*+\sqrt{(x_1^*)^2+4(x_2^*)^2}\\
&=&-\sqrt{1/3}+3\sqrt{1/3}\,=\,2\sqrt{1/3}\,\geq\,d\, f(\x^*)=-2\sqrt{1/3}\,.\end{eqnarray*}
As $j=4$ is the only index for which $f(\x)<0$ in $\tilde{W}_j$, one concludes that $f$ has no spurious negative local minima on $\bS^{n-1}$ and 
$f^*=\frac{-2\sqrt{1/3}}{3}$ is the global minimum. 

On the other hand, 
$\tilde{W}_2\cap\Omega\neq0$. Indeed $\x^*=(1,0)\in\Omega$ with nonnegative value $f^*=0$
because \eqref{lambdamin1} holds, since
\[\lambda_1(\nabla^2f(\x^*))\,=\,x_1^*-\sqrt{(x_1^*)^2+4(x_2^*)^2}\,=\,1-1=0\,\geq\,d\,f(\x^*)=0\,.\]
 Finally $\tilde{W}_3\cap\Omega=\emptyset$  because $\x^*=(\sqrt{1/3},\pm\sqrt{2/3})\in\bS^{n-1}\cap\tilde{W}_3$
 does not satisfy \eqref{lambdamin1} (i.e., $\x^*$ is not a \sonc~point), since
\[\lambda_1(\nabla^2f(\x^*))\,=\,\sqrt{1/3}-3\sqrt{1/3}\,=\,-2\,\sqrt{1/3}\,\not\,\geq\,d\,f(\x^*)=2\,\sqrt{1/3}\,.\]

\end{ex}
\section{Conclusion}

In this paper we have considered homogeneous polynomial optimization on the 
Euclidean sphere $\bS^{n-1}$ and completely characterize all points 
that satisfy first- and second-order necessary optimality conditions, solely in terms of $f$, its gradient and the two smallest eigenvalues of its Hessian. Then one may characterize a class of degree-$d$ 
forms with no spurious local minima,
in particular  via some decomposition of a related gradient ideal.

While the characterization of all points 
that satisfy first- and second-order necessary optimality conditions
is also valid for twice continuously differentiable positively homogeneous functions, characterizing
a class of such functions with no spurious local minima 
is challenging as we cannot invoke algebraic properties of
$f$ any more; indeed the second characterization of no spurious local minima via a certain gradient variety 
is proper to forms. Another issue for further investigation is the case 
where $f$ is an arbitrary degree-$d$ polynomial and not a form any more.

\vspace{0.2cm}

\noindent
{\bf Acknowledgement:} The author gratefully acknowledges 
Professor Jiawang Nie (UCSD at San Diego) for fruitful discussions that helped improve the paper.

\end{document}